\documentclass{article}
\usepackage{amsmath}
\usepackage{amsthm}
\usepackage{,amssymb}
\usepackage[dvips]{graphicx,color}

\pagestyle{plain}

\theoremstyle{plain}
\newtheorem{theorem}{Theorem}[section]
\newtheorem{proposition}[theorem]{Proposition}
\newtheorem{corollary}[theorem]{Corollary}
\newtheorem{lemma}[theorem]{Lemma}
\newtheorem{example}[theorem]{Example}

\title{Operator monotonicity of some functions}

\setcounter{footnote}{1}
\author{Masaru Nagisa
          \thanks{Department of Mathematics and Informatics, 
                     Graduate School of Science, Chiba University, Chiba, 263-8522, Japan. 
                      E-mail: \texttt{nagisa@math.s.chiba-u.ac.jp}} 
   \and Shuhei Wada
          \thanks{Department of Information and Computer Engineering, 
                    Kisarazu National College of Technology, 
                    Kisarazu City, Chiba, 292-0041, Japan. 
                    E-mail: \texttt{wada@j.kisarazu.ac.jp}}
}
\date{}

\begin{document}

\maketitle

\begin{abstract}
We investigate the operator monotonicity of the following functions:
\[ f(t) = t^\gamma \frac{(t^{\alpha_1} -1)(t^{\alpha_2} -1)\cdots (t^{\alpha_n} -1)}
       {(t^{\beta_1} -1)(t^{\beta_2} -1)\cdots (t^{\beta_n} -1)} \qquad (t\in (0,\infty) ), \]
where $\gamma \in \mathbb{R}$ and $\alpha_i , \beta_j >0$ with
$\alpha_i \neq \beta_j$ ($i,j=1,2,\ldots,n$).
This property for these functions has been considered by V.E. Szab\'o \cite{szabo}.
\end{abstract}


\section{Introduction and Main results}
We consider the following functions on $(0,\infty)$:
\[ f(t) =\begin{cases}
    t^\gamma \prod_{i=1}^n (t^{\alpha_i} -1)/(t^{\beta_i} -1) \qquad & \text{ if } t\neq 1\\
    \prod_{i=1}^n \alpha_i/\beta_i  & \text{ if } t=1 \end{cases} , \]
where $\gamma \in \mathbb{R}$ and $\alpha_i , \beta_i >0$ with
$\alpha_i \neq \beta_j$ ($i,j=1,2,\ldots,n$).
These functions has been treated by V.E.S. Szab\'o and he has discussed 
their operator monotonicity in \cite{szabo}.
He gave a sufficient  condition for that $f(t)$ becomes operator monotone on 
$(0,\infty)$.
But his argument contained some errors and it is known the existence of 
a function $f(t)$ which satisfies this condition and is not operator monotone.
We will  make a new sufficient condition for that $f(t)$ becomes 
operator monotone on $(0,\infty)$.

Let $g(t)$ be a real valued continuous function on $(0,\infty)$.
For a positive, invertible bounded linear operator $A$ on a Hilbert space $\mathcal{H}$,
we denote by $g(A)$ the continuous functional calculus of $A$ by $g$.
We call $g$ operator monotone if $0< A \le B$ implies $g(A)\le g(B)$.

We assume that $g$ is not constant.
We call $g$ a Pick function on $(0,\infty)$ if $g(t)$ has an analytic continuation $g(z)$
to the upper half plane $\mathbb{H}_+ =\{z\in \mathbb{C} \mid {\Im} z>0 \}$ and
$g(z)$ maps  $\mathbb{H}_+$ into $\mathbb{H}_+$, where $\Im z$ means the imaginary part of $z$.  
It is known that a Pick function is operator monotone and conversely 
a non-constant operator monotone function is a Pick function (L\"owner's theorem),
see \cite{bhatia1}, \cite{donoghue}, \cite{hiai} and \cite{hiaipetz}.
In this paper, we will show that some $f(t)$ is operator monotone by proving that
$f(t)$ is a Pick function.
For a continuous function $g$ on $(0,\infty)$,
$g$ becomes operator monotone if there exists a sequence $\{g_n\}$ 
of operator monotone functions such that $\{g_n\}$ pointwise converges to 
$g$ on $(0,\infty)$.

We assume that $f(t)$ is a Pick function.
By definition, there exists a holomorphic function $f(z)$ on $\mathbb{H}_+$ 
with $f(\mathbb{H}_+)\subset \mathbb{H}_+$,
\begin{gather*}  f(z) = z^\gamma \prod_{i=1}^n (z^{\alpha_i}-1)/(z^{\beta_i}-1) 
     \qquad (0< {\rm Arg} z <\pi)  \\
\intertext{and}
     \lim_{z\in \mathbb{H}_+ \to t} f(z) = f(t)  \quad \text{ for all } t>0.
\end{gather*}
Since the imaginary part $\Im f(z)$ of $f(z)$ is harmonic and positive on $\mathbb{H}_+$, 
$f(z)$ does not have a zero on $\mathbb{H}_+$.
This means that $|\gamma|\le 2$ and $0<\alpha_i \le 2$ ($i=1,2,\ldots,n$).
We also have $0<\beta_i \le 2$ ($i=1,2,\ldots,n$) because $f(z)$ does not have
a singular point on $\mathbb{H}_+$.

So we consider the problem that $f(t)$ becomes a Pick function on $(0, \infty)$
under the condition $|\gamma|<2$, $0<\alpha_i, \beta_i<2$
and $\alpha_i \neq \beta_j$   ($i,j=1,2,\ldots,n$).
In this case, the function
\[  f(z) = z^\gamma \prod_{i=1}^n (z^{\alpha_i}-1)/(z^{\beta_i}-1) \]
is holomorphic on $\mathbb{H}_+$ and continuous on the closure of $\mathbb{H}_+$.

We denote by ${\rm Arg}$ the function from $\mathbb{C}\setminus \{ 0 \}$ to
$[0, 2\pi)$ with $z = |z| e^{i{\rm Arg}z}$ for $z\in \mathbb{C}\setminus \{ 0 \}$.
When $|\gamma|\le 2, 0<\alpha_i, \beta_i\le 2$, we define $\arg f(z)$  as follows:
\[   \arg f(z) = \gamma {\rm Arg} z 
                    + \sum_{i=1}^n ({\rm Arg}(z^{\alpha_i} -1) - {\rm Arg}(z^{\beta_i}-1)),
     \quad z\in \mathbb{H}_+.  \]
Then we remark that, for $t>0$,
\[   \lim_{z\in \mathbb{H}_+\to t}\arg f(z) = 0 .  \]
We may consider that $\arg f(z)$ is continuous on the closure 
$\overline{\mathbb{H}_+}$ of $\mathbb{H}_+$ except $\{ 0 \}$
if $|\gamma|<2$ and $0<\alpha_i, \beta_i<2$.                                           

We define a function $F:[0,2]\times [0,2]\longrightarrow \mathbb{R}$ as follows:
\[  F(a,b) = \begin{cases}  a-b \quad & \text{if } a\ge b, \; 0\le b \le 1 \\
                  a-1 & \text{if } 1<a,b\le 2 \\
                  0  & \text{if } a<b, \; 0\le a \le 1 \end{cases}.  \]
Then we can prove the following statement:

\begin{theorem}
Let $|\gamma|\le 2$, $0<\alpha_i,\beta_i \le 2$, $\alpha_i\neq \beta_j$
$(i,j=1,2, \ldots , n)$ and $\alpha_i \le \alpha_j $,
$\beta_i \le \beta_j$ if $1\le i<j \le n$.
If it satisfies
\[  0 \le \gamma - \sum_{i=1}^n F(\beta_i, \alpha_i)
    \text{ and }
    \gamma + \sum_{i=1}^n F(\alpha_i, \beta_i) \le 1, \]
then we have
\[ f(t) =\begin{cases}
    t^\gamma \prod_{i=1}^n (t^{\alpha_i} -1)/(t^{\beta_i} -1) \qquad & \text{ if } t\neq 1\\
    \prod_{i=1}^n \alpha_i/\beta_i  & \text{ if } t=1 \end{cases}  \]
is operator monotone on $(0,\infty)$.

\end{theorem}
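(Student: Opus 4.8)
The plan is to prove that $f$ extends to a Pick function on $\mathbb{H}_+$ and then invoke L\"owner's theorem as recalled in the introduction; operator monotonicity follows immediately. Since a pointwise limit of operator monotone functions is again operator monotone, it suffices to treat the strict case $|\gamma|<2$, $0<\alpha_i,\beta_i<2$ (where, as already noted, $f(z)=z^\gamma\prod_{i}(z^{\alpha_i}-1)/(z^{\beta_i}-1)$ is holomorphic on $\mathbb{H}_+$, continuous on $\overline{\mathbb{H}_+}\setminus\{0\}$, and has neither zeros nor poles there), recovering the boundary parameter values afterwards by approximation. For such $f$ the whole problem reduces to the single inequality $0\le\arg f(z)\le\pi$ on $\mathbb{H}_+$: this forces $\Im f\ge 0$, and since $f$ is nonconstant and holomorphic the strong maximum principle rules out an interior value $0$ or $\pi$, upgrading it to $\Im f>0$, i.e.\ $f(\mathbb{H}_+)\subset\mathbb{H}_+$.

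Because $f$ is holomorphic and nowhere zero on $\mathbb{H}_+$, the function $\arg f=\Im\log f$ is harmonic there; and by its very definition as a finite sum of the terms $\gamma\,{\rm Arg}\,z$ and $\pm\,{\rm Arg}(z^{\alpha_i}-1),\pm\,{\rm Arg}(z^{\beta_i}-1)$ it is bounded on $\mathbb{H}_+$ and continuous up to $\overline{\mathbb{H}_+}\setminus\{0\}$. I would then apply the maximum principle for bounded harmonic functions: the only boundary point at which continuity can fail is $0$, near which $f(z)\sim z^\gamma$ keeps $\arg f$ bounded, and the behaviour at $\infty$ is likewise of power type, $f(z)\sim z^{\gamma+\sum_i(\alpha_i-\beta_i)}$, so these isolated singular points are removable for the purpose of bounding a bounded harmonic function. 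Hence the supremum and infimum of $\arg f$ over $\mathbb{H}_+$ are attained on the real axis. On $(0,\infty)$ we already have $\arg f=0$, so everything collapses to estimating $\arg f$ on the negative real axis.

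Writing $z=te^{i\theta}$ with $t>0$ and $\theta\to\pi^-$, set $\psi_\alpha(t)=\lim_{\theta\to\pi^-}{\rm Arg}((te^{i\theta})^\alpha-1)={\rm Arg}(t^\alpha e^{i\alpha\pi}-1)$, so that
\[ \lim_{z\in\mathbb{H}_+\to -t}\arg f(z)=\gamma\pi+\sum_{i=1}^n\bigl(\psi_{\alpha_i}(t)-\psi_{\beta_i}(t)\bigr). \]
The structural facts I would establish about $\psi_\alpha$ are that, as $t$ runs over $(0,\infty)$, the point $t^\alpha e^{i\alpha\pi}-1$ traces the straight ray from $-1$ in the direction $e^{i\alpha\pi}$; since this ray avoids the origin, the subtended angle $\psi_\alpha$ is \emph{monotone} in $t$, with $\psi_\alpha(0^+)=\pi$ and $\psi_\alpha(\infty)=\alpha\pi$. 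Thus $\psi_\alpha$ decreases through $(\alpha\pi,\pi)$ when $\alpha<1$, increases through $(\pi,\alpha\pi)$ when $\alpha>1$, and equals $\pi$ identically when $\alpha=1$. The heart of the argument is then the per-pair estimate
\[ -\pi\,F(\beta,\alpha)\le\psi_\alpha(t)-\psi_\beta(t)\le\pi\,F(\alpha,\beta)\qquad(t>0); \]
summing over $i$ and inserting the hypotheses $0\le\gamma-\sum_iF(\beta_i,\alpha_i)$ and $\gamma+\sum_iF(\alpha_i,\beta_i)\le1$ gives $0\le\lim_{z\to -t}\arg f(z)\le\pi$ for every $t>0$, which is exactly what the reduction above requires.

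The main obstacle is precisely this per-pair estimate, i.e.\ bounding $\psi_\alpha(t)-\psi_\beta(t)$ by $\pi F(\alpha,\beta)$ uniformly in $t$ (and its mirror). The difficulty is that $\psi_\alpha$ and $\psi_\beta$ are coupled through the common parameter $t$ (via $t^\alpha$ and $t^\beta$), so the crude bound $\sup\psi_\alpha-\inf\psi_\beta$ is generally too weak and one must track the two monotone functions jointly. This is where the three branches of $F$ originate and must be matched case by case: when $1<\alpha,\beta$ the crude bound already yields $\psi_\alpha-\psi_\beta\le\pi(\alpha-1)=\pi F(\alpha,\beta)$; when $\alpha,\beta\le1$ with $\alpha\ge\beta$, or when $\beta\le1<\alpha$, the coupling (both angles equal $\pi$ at $t=0^+$) must be exploited to show the difference never exceeds its endpoint value $\pi(\alpha-\beta)$, the crude bound being too lossy here; and when $\alpha<\beta$ with $\alpha\le1$ one shows the difference stays nonpositive, giving $F(\alpha,\beta)=0$. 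Carrying out this optimization rigorously, rather than reading off endpoint values, is the real work of the proof and, I expect, exactly the point at which the earlier argument of Szab\'o went astray.
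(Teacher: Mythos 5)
Your overall architecture coincides with the paper's: reduce to showing $0\le\arg f(z)\le\pi$ on $\mathbb{H}_+$ (hence $f$ is a Pick function), push the estimate to the boundary by the maximum principle for the bounded harmonic function $\arg f$ (the paper does this on annuli $\delta_0\le|z|\le R$, controlling $|z|\to0$ and $|z|\to\infty$ by the power-type asymptotics, exactly as you indicate), and then establish on the negative real axis the per-pair estimate $-\pi F(\beta,\alpha)\le \mathrm{Arg}(z^{\alpha}-1)-\mathrm{Arg}(z^{\beta}-1)\le\pi F(\alpha,\beta)$, which is the paper's Lemma 2.4. The problem is that you state this estimate, correctly identify it as ``the real work of the proof,'' and then do not prove it. The tools you put on the table --- separate monotonicity of each $\psi_\alpha(t)=\mathrm{Arg}(t^{\alpha}e^{i\alpha\pi}-1)$ along the ray from $-1$ in direction $e^{i\alpha\pi}$, together with the endpoint values $\psi_\alpha(0^+)=\pi$ and $\psi_\alpha(\infty)=\alpha\pi$ --- are not sufficient: for $0<\beta<\alpha\le1$ they only yield $\psi_\alpha(t)-\psi_\beta(t)\in((\alpha-1)\pi,(1-\beta)\pi)$, which neither gives the required lower bound $0$ nor the required upper bound $(\alpha-\beta)\pi$, and monotonicity of each function separately says nothing about where their \emph{difference} lives at intermediate $t$. (Your case list also slightly misplaces the difficulty: for $\beta\le1<\alpha$ the crude bounds $\pi<\mathrm{Arg}(z^{\alpha}-1)<\alpha\pi$ and $\beta\pi\le\mathrm{Arg}(z^{\beta}-1)\le\pi$ already suffice; the only genuinely coupled case is $\alpha,\beta\le1$.)

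So there is a genuine gap at precisely the step you flagged. The paper closes it (Lemma 2.4(2)) by a specific device you would need to supply or replace: for rational exponents write $z^{\alpha}=w^{k}$, $z^{\beta}=w^{l}$ with $k>l$, factor
\[
\frac{z^{\alpha}-1}{z^{\beta}-1}=z^{\alpha-\beta}\cdot\frac{w^{k-1}+\cdots+w+1}{w^{k-1}+\cdots+w^{k-l+1}+w^{k-l}},
\]
prove the argument comparison $\arg(w^{k-1}+\cdots+w^{m})\ge\arg(w^{m-1}+\cdots+w+1)$ for the partial geometric sums, and pass to arbitrary real exponents by continuity. Without this (or an equivalent joint analysis of $\psi_\alpha-\psi_\beta$), the proposal establishes the theorem only in the regime where the crude termwise bounds happen to match $F$, and in particular does not cover the case where some $\beta_i<\alpha_i\le1$ --- which includes the paper's motivating example $f_a(t)=a(1-a)(t-1)^2/((t^{a}-1)(t^{1-a}-1))$.
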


This theorem implies the following statement as seen in Szab\'o's paper\cite{szabo}:
$f$ is operator monotone if  
\[   0\le \gamma +\sum_{i=1}^u\alpha_i +(n-u)-(v+\sum_{j=v+1}^n \beta_j)
      \le \gamma + u  +\sum_{i=u+1}^n\alpha_i -\sum_{j=1}^v \beta_j -(n-v) \le 1, \]
where  $\alpha_1\le \cdots \le \alpha_u\le 1< \alpha_{u+1}\le \cdots \le \alpha_n$ 
and $\beta_1\le \cdots \le \beta_v\le 1 < \beta_{v+1}\le \cdots \le \beta_n$.
As examples we can show that 
\[    f_a(t)=a(1-a)\frac{(t-1)^2}{(t^a-1)(t^{1-a}-1)} \qquad (-1\le a \le 2) \]
is an operator monotone function on $[0, \infty)$ and
\[    c(\lambda, \mu) =(\lambda \mu)^{-1/2}\prod_{i=1}^n 
                     \frac{\sinh (\frac{\beta_i}{2} \log \frac{\lambda}{\mu})}
                     {\sinh (\frac{\alpha_i}{2} \log \frac{\lambda}{\mu})}  \]
is a Morozova-Chentsov function
(\cite{hiaipetz},\cite{kumagai}, \cite{morozovachentsov} and \cite{petz}) 
if it satisfies
\[   0\le \gamma +\sum_{i=1}^u\alpha_i+(n-u) -(v+\sum_{j=v+1}^n \beta_j)
      \le \gamma + u +\sum_{i=u+1}^n\alpha_i -\sum_{j=1}^v \beta_j -(n-v) \le 1, \]
where $\gamma = (1+\sum_{i=1}^n(\alpha_i-\beta_i) )/2$.


\section{Proof of theorem}

\begin{lemma}
Let $0<b<a<2$ and $z=re^{i\theta}$ $(0\le \theta={\rm Arg} z \le \pi)$.
For any $\epsilon > 0$, there exists an $R>0$ such that 
\[  r>R \Rightarrow |\arg \frac{z^a-1}{z^b-1} - (a-b)\theta|<\epsilon .  \]
\end{lemma}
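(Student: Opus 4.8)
The plan is to isolate the dominant term of each factor for large $r$. Consistently with the definition of $\arg f$ given above, $\arg\frac{z^a-1}{z^b-1}$ denotes ${\rm Arg}(z^a-1)-{\rm Arg}(z^b-1)$. Writing $z^a-1=z^a(1-z^{-a})$, one sees that $z^a-1$ points in essentially the same direction as $z^a$ when $r$ is large, because $|z^{-a}|=r^{-a}\to 0$. Since $z=re^{i\theta}$ with $\theta={\rm Arg}\,z\in[0,\pi]$ and $a<2$, the dominant factor satisfies ${\rm Arg}(z^a)=a\theta\in[0,a\pi]\subset[0,2\pi)$, so I expect ${\rm Arg}(z^a-1)\to a\theta$ and likewise ${\rm Arg}(z^b-1)\to b\theta$; subtracting then gives the claimed limit $(a-b)\theta$.

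To make this precise, I would write $\phi_a=\arg(1-z^{-a})$ and $\phi_b=\arg(1-z^{-b})$ for the principal arguments in $(-\pi,\pi]$. For $r>1$ the real parts $1-r^{-a}\cos(a\theta)$ and $1-r^{-b}\cos(b\theta)$ are positive, and since $0<b<a$ both satisfy $|\phi_a|,|\phi_b|\le\arctan\frac{r^{-b}}{1-r^{-b}}=:\eta_0(r)\to 0$, uniformly in $\theta$. The delicate point — and the one I expect to be the real obstacle — is to verify that ${\rm Arg}(z^a-1)$, which by convention lies in $[0,2\pi)$, equals $a\theta+\phi_a$ exactly, with no spurious jump of $2\pi$ across the branch cut on the positive real axis. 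This is where the hypothesis $a<2$ enters: one has $z^a-1=|z^a-1|\,e^{i(a\theta+\phi_a)}$, so it suffices to check $a\theta+\phi_a\in[0,2\pi)$. The key observation is that $\Im(1-z^{-a})=r^{-a}\sin(a\theta)$ has the same sign as $\sin(a\theta)$, so $\phi_a\ge 0$ when $a\theta\in[0,\pi]$ and $\phi_a\le 0$ when $a\theta\in[\pi,a\pi]$; using also $a\theta\le a\pi$, in both regimes $a\theta+\phi_a$ stays inside $[0,a\pi+\eta_0(r)]$, and once $r$ is large enough that $a\pi+\eta_0(r)<2\pi$ this interval lies in $[0,2\pi)$. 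Hence ${\rm Arg}(z^a-1)=a\theta+\phi_a$ and $|{\rm Arg}(z^a-1)-a\theta|\le\eta_0(r)$ uniformly in $\theta$; the identical argument gives $|{\rm Arg}(z^b-1)-b\theta|\le\eta_0(r)$.

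Finally I would combine the two estimates:
\[ \Bigl|\arg\frac{z^a-1}{z^b-1}-(a-b)\theta\Bigr|
   \le |{\rm Arg}(z^a-1)-a\theta|+|{\rm Arg}(z^b-1)-b\theta|
   \le 2\eta_0(r). \]
Choosing $R$ so large that $2\eta_0(R)<\epsilon$ and $a\pi+\eta_0(R)<2\pi$ then yields the assertion for all $r>R$ and all $\theta\in[0,\pi]$ simultaneously. The only steps requiring genuine care are the uniform control of $\phi_a,\phi_b$ and the branch-cut bookkeeping above; the rest is the elementary factorization $z^a-1=z^a(1-z^{-a})$.
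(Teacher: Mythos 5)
Your proof is correct and follows essentially the same route as the paper: both factor out the dominant term via $z^a-1=z^a(1-z^{-a})$ and $z^b-1=z^b(1-z^{-b})$, show the correction factors have uniformly small argument for large $r$, and read off $\arg(z^a/z^b)=(a-b)\theta$. The only difference is that you make explicit the uniform bound $\eta_0(r)$ and the branch-cut check that ${\rm Arg}(z^a-1)=a\theta+\phi_a$ with no $2\pi$ jump, details the paper leaves implicit.
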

\begin{proof}
When $|z|\to \infty$, $(z^a-1)/z^a = 1 - 1/z^a \to 1$ and
$z^b/(z^b-1)=1 + 1/(z^b-1) \to 1$.
So we can choose an $R>0$ such that 
$|\arg (z^a-1)/z^a|, |\arg z^b/(z^b-1)|<\epsilon/2$.
Then we have
\[   |\arg \frac{z^a-1}{z^b-1} - (a-b)\theta|
     = | \arg \frac{z^a-1}{z^a}\frac{z^a}{z^b}\frac{z^b}{z^b-1} -
          \arg \frac{z^a}{z^b}|<\epsilon . \]
\end{proof}

\vspace{5mm}

Let $\alpha = (\alpha_1,\alpha_2,\ldots , \alpha_n)$ and 
$\beta = (\beta_1, \beta_2, \ldots, \beta_n)$ with
\[   0<\alpha_i, \beta_i<2  \text{  and } \alpha_i\neq \beta_j \quad (i,j=1,2,\ldots , n) \]
and set
\[  g(z) =\begin{cases} \prod_{i=1}^n (z^{\alpha_i}-1)/(z^{\beta_i}-1) \quad 
      & \Im z\ge 0 \text{ and } z \neq 1\\
      \prod_{i=1}^n \alpha_i/\beta_i & z=1 \end{cases} . \]
We define numbers $\theta(\alpha, \beta)$ and $\Theta(\alpha, \beta)$ as follows:
\begin{gather*}
    \theta(\alpha, \beta) = \inf \{ \arg g(re^{\pi i}) \mid 0<r\le 1 \}  \\
    \text{and }  \Theta(\alpha, \beta) = \sup \{ \arg g(re^{\pi i}) \mid 0< r\le 1 \}  .
\end{gather*}
Since $\lim_{r\to 0+} \arg g(re^{\pi i}) = 0$, we have 
$\theta(\alpha, \beta)\le 0$.
Remarking the fact
\[  \frac{(re^{\pi i})^a-1}{(re^{\pi i})^b -1}= (re^{\pi i})^{a-b} \frac{1-(e^{-\pi i}/r)^a}{1-(e^{-\pi i}/r)^b}
      =  (re^{\pi i})^{a-b}  \overline{ (\frac{(e^{\pi i}/r)^a-1}{(e^{\pi i}/r)^b-1}) } ,   \]
we can get the following relation:
\[  \arg g(e^{\pi i}/r) + \arg g(re^{\pi i}) = \sum_{i=1}^n (\alpha_i - \beta_i) \pi .  \]
Then the following numbers $F_0(\alpha, \beta)$ and $G_0(\alpha, \beta)$ can be represented by
$\theta(\alpha,\beta)$ and $\Theta(\alpha, \beta)$ as follows:
\begin{align*}
  F_0(\alpha, \beta) & = \sup \{ \arg g(re^{\pi i})/\pi \mid r>0 \} \\
            & = \max \{ \Theta(\alpha, \beta), \sum_{i=1}^n (\alpha_i-\beta_i)\pi - \theta(\alpha,\beta) \} /\pi 
\end{align*}
and
\begin{align*}
  G_0(\alpha, \beta) & = \inf \{\arg g(re^{\pi i})/\pi \mid r>0 \}  \\
          &   = \min \{ \theta(\alpha, \beta), \sum_{i=1}^n (\alpha_i-\beta_i)\pi - \Theta(\alpha, \beta) \} /\pi.
\end{align*}
We enumerate the facts which follow from above observation:
\begin{gather*}
   \lim_{r\to 0+} \arg g(re^{\pi i}) =0, \quad
   \lim_{r\to \infty} \arg g(re^{\pi i}) = \sum_{i=1}^n (\alpha_i-\beta_i) \pi , \\
   G_0(\alpha, \beta)\le \min \{0, \sum_{i=1}^n(\alpha_i - \beta_i)\},  \quad 
   F_0(\alpha, \beta) \ge \max \{0, \sum_{i=1}^n(\alpha_i - \beta_i) \} , \\
   G_0(\alpha, \beta) < 0 \; \Leftrightarrow \; 
   F_0(\alpha, \beta) > \sum_{i=1}^n (\alpha_i - \beta_i)
\intertext{and}
   G_0(\alpha, \beta)= -F_0(\beta, \alpha)  .
\end{gather*}

\vspace{5mm}

%
%

\begin{theorem}
Let $|\gamma|\le 2$, $0<\alpha_i, \beta_i < 2$,  $\alpha_i\neq \beta_j$ 
$(i,j=1,2,\ldots, n)$
and
\[   f(t) =\begin{cases} t^\gamma \prod_{i=1}^n (t^{\alpha_i}-1)/(t^{\beta_i}-1)
       \quad & t\in (0,\infty)\setminus\{1\} \\
       \prod_{i=1}^n \alpha_i/\beta_i & t=1 \end{cases} . \]
Then, for any $s>0$, the following are equivalent:
\begin{enumerate}
  \item[$(1)$]  $f(t)^s$ is operator monotone.
  \item[$(2)$]  $\gamma -F_0(\beta,\alpha)\ge0$ and
                 $s(\gamma + F_0(\alpha, \beta)) \le 1$.
\end{enumerate}
\end{theorem}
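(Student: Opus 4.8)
The plan is to translate the operator monotonicity of $f(t)^s$ into a two-sided pointwise bound on the boundary function $\arg f$, and then to read that bound off from the quantities $F_0(\alpha,\beta)$ and $G_0(\alpha,\beta)=-F_0(\beta,\alpha)$ computed above. First I would record that $f(t)>0$ for every $t\in(0,\infty)$: for $t>1$ each factor $(t^{\alpha_i}-1)/(t^{\beta_i}-1)$ and $t^\gamma$ is positive, for $0<t<1$ numerator and denominator of each factor are both negative, and $f(1)=\prod\alpha_i/\beta_i>0$. Hence $f(t)^s$ is a well-defined positive continuous function, and $f(z)^s=\exp(s\log f(z))$ is single-valued and holomorphic on $\mathbb{H}_+$, where $\log f$ is the branch with $\arg f(z)\to 0$ as $z\to t>0$. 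Since $f$ is non-constant, L\"owner's theorem reduces ``$f^s$ is operator monotone'' to ``$f^s$ is a Pick function'', i.e.\ to $f(z)^s\in\mathbb{H}_+$ for all $z\in\mathbb{H}_+$. As $\arg(f(z)^s)=s\,\arg f(z)$, this is exactly the requirement
\[ 0\le \arg f(z)\le \frac{\pi}{s}\qquad(z\in\mathbb{H}_+), \]
the strict interior inequalities $0<\arg f(z)<\pi/s$ following from the maximum principle once the weak ones hold (note that $\arg f\equiv 0$ would force $f$ to be a positive real holomorphic function, hence constant).

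Next I would exploit that $\arg f=\Im\log f$ is harmonic on $\mathbb{H}_+$ and, under $|\gamma|\le 2$, $0<\alpha_i,\beta_i<2$, extends continuously to $\overline{\mathbb{H}_+}\setminus\{0\}$ while staying bounded near $0$ and near $\infty$. A Phragm\'en--Lindel\"of/maximum-principle argument then yields
\[ \sup_{z\in\mathbb{H}_+}\arg f(z)=\sup_{z\in\partial\mathbb{H}_+}\arg f(z),\qquad \inf_{z\in\mathbb{H}_+}\arg f(z)=\inf_{z\in\partial\mathbb{H}_+}\arg f(z), \]
so the two-sided bound need only be checked on the real axis.

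On the positive axis $\arg f=0$. On the negative axis $z=re^{\pi i}$ one has ${\rm Arg}\,z=\pi$, so $\arg f(re^{\pi i})=\gamma\pi+\arg g(re^{\pi i})$, and the definitions of $F_0,G_0$ together with $G_0(\alpha,\beta)=-F_0(\beta,\alpha)$ give
\[ \sup_{r>0}\arg f(re^{\pi i})=\pi\bigl(\gamma+F_0(\alpha,\beta)\bigr),\qquad \inf_{r>0}\arg f(re^{\pi i})=\pi\bigl(\gamma-F_0(\beta,\alpha)\bigr). \]
Combining both pieces of the boundary,
\[ \sup_{\mathbb{H}_+}\arg f=\max\{0,\pi(\gamma+F_0(\alpha,\beta))\},\qquad \inf_{\mathbb{H}_+}\arg f=\min\{0,\pi(\gamma-F_0(\beta,\alpha))\}. \]
Since $\pi/s>0$, the condition $0\le\arg f\le\pi/s$ is equivalent to $\gamma-F_0(\beta,\alpha)\ge 0$ and $s(\gamma+F_0(\alpha,\beta))\le 1$, which is precisely statement $(2)$. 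This establishes $(1)\Leftrightarrow(2)$.

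The main obstacle I anticipate is making the reduction to the boundary rigorous: $\mathbb{H}_+$ is unbounded and $\arg f$ has a boundary singularity at $0$, so a bare maximum principle does not apply, and one must argue that the isolated boundary point $0$ (and the point at infinity) create no new extrema; this is exactly where the boundedness of $\arg f$ near $0$ and $\infty$ is essential. A second delicate point is the treatment of the non-strict inequalities in $(2)$: for $(1)\Rightarrow(2)$ one passes to boundary limits of interior values, and for $(2)\Rightarrow(1)$ one must check that equality in $(2)$ still leaves $f^s$ mapping the open half-plane into itself, which again relies on the strong maximum principle (and, in borderline directions, on the approximation remark that a pointwise limit of operator monotone functions is operator monotone).
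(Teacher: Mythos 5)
Your proposal is correct and follows essentially the same route as the paper: reduce operator monotonicity of $f^s$ to the two-sided bound $0\le\arg f\le\pi/s$ on $\mathbb{H}_+$, push the extrema to the boundary by a maximum-principle argument that handles the singular points $0$ and $\infty$ via the boundedness and asymptotics of $\arg f$, and read the boundary extrema off the negative axis as $\pi(\gamma+F_0(\alpha,\beta))$ and $\pi(\gamma-F_0(\beta,\alpha))$. The paper implements your ``Phragm\'en--Lindel\"of'' step concretely, applying the maximum principle on the compact region $\{\delta_0\le|z|\le R,\ \Im z\ge0\}$ with Lemma 2.1 supplying the needed control on the arc $|z|=R$, but the idea is the same.
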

\begin{proof}
(1)$\Rightarrow$(2)\;
Since $f(t)^s$ is operator monotone, this implies
\begin{align*}
      & \quad 0 \le \arg f(re^{\pi i})^s \le \pi  \\
  \Rightarrow & \quad 0\le s( \gamma \pi +\arg g(re^{\pi i})) \le \pi \\
  \Rightarrow & \quad 0\le \gamma +G_0(\alpha, \beta) \text{ and } s(\gamma + F_0(\alpha, \beta)) \le 1 \\
  \Rightarrow & \quad 0\le \gamma -F_0(\beta, \alpha) \text{ and } s(\gamma + F_0(\alpha, \beta)) \le 1.
\end{align*}

(2)$\Rightarrow$(1)\; When $z=re^{\pi i}$ $(r>0$), we have
\[  s(\gamma-F_0(\beta, \alpha)) \pi = s(\gamma+G_0(\alpha, \beta)) \pi
   \le \arg (f(z)^s) \le s(\gamma + F_0(\alpha, \beta))\pi .  \]
and, by assumption,
\[  0 \le \arg (f(z)^s) \le \pi.  \]

We remark that
\[   0\le s F_0(\beta, \alpha) \le s\gamma \le 1-s F_0(\alpha, \beta)\le 1 . \]
Since
\[  \lim_{|z|\to 0} \arg g(z) = \lim_{|z|\to 0}\arg(\prod_{i=1}^n 
         \frac{(z^{\alpha_i}-1)}{(z^{\beta_i}-1)} )= 0,  \]
 for any $\epsilon >0$, we can choose $\delta_0 >0$ such that $0<|z| \le \delta_0$
implies 
\[  |\arg f(z)^s - s\gamma {\rm Arg} z|<\epsilon.  \]
In particular, we have $-\epsilon < \arg (f(z)^s) < \pi +\epsilon$ for any $z\in \mathbb{H}_+$
with $0<|z|\le \delta_0$.

To prove that $f(t)^s$ is operator monotone, it suffices to show that
\[  0 \le \arg (f(z)^s) \le \pi \qquad \text{ for all } z\in \mathbb{H}_+.  \]
We assume that there exists $z_0\in \mathbb{H}_+$ such that $\arg (f(z_0)^s)<0$ or 
$\arg (f(z_0)^s)>\pi$.
For arbitrary $\epsilon >0$, by Lemma 2.1, we can choose $R_0$ such that $R_0>|z_0|$ and
the condition $R>R_0$ implies
\[  (\gamma + \sum_{i=1}^n(\alpha_i-\beta_i)){\rm Arg} z -\epsilon < \arg f(z) <
     (\gamma + \sum_{i=1}^n(\alpha_i-\beta_i)){\rm Arg} z +\epsilon  \]
for all $z\in \mathbb{H}_+$ with $|z|=R$.
When $\arg (f(z_0)^s)<0$, we choose $\epsilon >0$ with  $\arg (f(z_0)^s)<\min \{ -\epsilon, -s\epsilon\}$.
It holds $|z_0|>\delta_0$ automatically, 
so we have $\arg (f(z_0)^s) <\arg (f(z)^s)$ if  $z\in \mathbb{H}_+$ with 
$|z|=R$ or $|z|=\delta_0$.
This contradicts to the maximum value principle of the harmonic function 
$\arg( f(\cdot)^s)\left( = s \arg(f(\cdot)) \right) $ for the closed region
$\{z \mid \Im z\ge 0, \delta_0 \le |z|\le R\}$.
When $\arg (f(z_0)^s)>\pi$, we choose $\epsilon >0$ with  
$\arg (f(z_0)^s)>\pi +s\epsilon$.
Since $\sum_{i=1}^n(\alpha_i-\beta_i)\le F_0(\alpha, \beta)$, we have
 $\arg (f(z_0)^s) >\arg (f(z)^s)$ if  $z\in \mathbb{H}_+$ with $|z|=R$.
This also implies a contradiction by the similar reason.
\end{proof}

We put $s=1$ and $\gamma = 0$ in the above proof of (2)$\Rightarrow$(1).
Because $G_0(\alpha, \beta)\le 0$ and $F_0(\alpha,\beta) \ge \sum_{i=1}^n(\alpha_i-\beta_i)$,
we can get the following from this argument:

\vspace{5mm}

%
%

\begin{corollary}
Let $0<\alpha_i, \beta_i < 2$,  $\alpha_i\neq \beta_j$ $(i,j=1,2,\ldots, n)$.
Then we have
\[   -F_0(\beta, \alpha)\pi =G_0(\alpha, \beta)\pi \le \arg g(z) \le F_0(\alpha, \beta) \pi \]
for all $z$ with $\Im z\ge 0$.

Moreover $g$ is an operator monotone function on $[0,\infty)$ if and only if
$0\le G_0(\alpha,\beta)\le F_0(\alpha,\beta) \le 1$.
\end{corollary}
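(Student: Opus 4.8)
The plan is to read both assertions off the harmonicity of $\arg g$ on $\mathbb{H}_+$, combined with the two-circle maximum-principle argument already carried out in the proof of Theorem 2.2.

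First I would note that, because $0<\alpha_i,\beta_i<2$, for every $z\in\mathbb{H}_+$ one has $0<\alpha_i\,\mathrm{Arg}\,z<2\pi$ and $0<\beta_i\,\mathrm{Arg}\,z<2\pi$, so none of the factors $z^{\alpha_i}-1$, $z^{\beta_i}-1$ vanishes there. Hence $g$ is holomorphic and zero-free on $\mathbb{H}_+$, and $\arg g(z)=\sum_{i=1}^n(\mathrm{Arg}(z^{\alpha_i}-1)-\mathrm{Arg}(z^{\beta_i}-1))=\Im\log g(z)$ is harmonic on $\mathbb{H}_+$. Its boundary values are $\arg g(t)=0$ on $(0,\infty)$, and on the ray $\{re^{\pi i}:r>0\}$ they lie in $[G_0(\alpha,\beta)\pi,\,F_0(\alpha,\beta)\pi]$ by the very definitions of $F_0$ and $G_0$; note that $0$ belongs to this interval since $G_0(\alpha,\beta)\le 0\le F_0(\alpha,\beta)$. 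The stated equality $-F_0(\beta,\alpha)\pi=G_0(\alpha,\beta)\pi$ is nothing but the relation $G_0(\alpha,\beta)=-F_0(\beta,\alpha)$ recorded before Theorem 2.2.

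To propagate these boundary bounds to the whole of $\overline{\mathbb{H}_+}$, I would rerun the argument of Theorem 2.2 with $\gamma=0$ and $s=1$, so that $f=g$. Using $\lim_{r\to 0+}\arg g(re^{\pi i})=0$ to control $\arg g$ on a small semicircle $|z|=\delta_0$, and Lemma 2.1 together with $\lim_{r\to\infty}\arg g(re^{\pi i})=\sum_{i=1}^n(\alpha_i-\beta_i)\pi$ (whose values stay inside $[G_0\pi,F_0\pi]$ because $\sum_{i=1}^n(\alpha_i-\beta_i)$ lies between $G_0$ and $F_0$) to control it on a large semicircle $|z|=R$, I apply the maximum and minimum principles to the harmonic function $\arg g$ on each truncated half-annulus $\{\Im z\ge 0,\ \delta_0\le|z|\le R\}$. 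This gives $G_0(\alpha,\beta)\pi\le\arg g(z)\le F_0(\alpha,\beta)\pi$ for all $z$ with $\Im z\ge 0$, which is the first assertion.

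For the equivalence I would invoke L\"owner's theorem. If $0\le G_0(\alpha,\beta)\le F_0(\alpha,\beta)\le 1$, the bound just proved yields $0\le\arg g(z)\le\pi$, that is $\Im g(z)\ge 0$, on $\mathbb{H}_+$; being non-constant, $g$ is then a Pick function, and since it extends continuously to $t=0$ with $g(0+)=1$ it is operator monotone on $[0,\infty)$. Conversely, if $g$ is operator monotone it is a Pick function, so $0\le\arg g(z)\le\pi$ on $\mathbb{H}_+$; passing to the boundary ray $z\to re^{\pi i}$ and taking the infimum and supremum over $r>0$ forces $0\le G_0(\alpha,\beta)$ and $F_0(\alpha,\beta)\le 1$, i.e. $0\le G_0\le F_0\le 1$. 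The only delicate point is the propagation step, where $\arg g$ is a bounded harmonic function on the \emph{unbounded} half-plane: a naive maximum principle does not apply, and it is precisely the two-circle truncation of Theorem 2.2 (a small circle near $0$ and a large circle controlled by Lemma 2.1) that overcomes this, so reusing that device makes the step routine.
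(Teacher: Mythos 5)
Your proposal is correct and follows exactly the route the paper intends: the paper's own ``proof'' of this corollary is the single remark that one puts $s=1$, $\gamma=0$ in the proof of Theorem 2.2 (2)$\Rightarrow$(1) and uses $G_0(\alpha,\beta)\le 0$ and $F_0(\alpha,\beta)\ge\sum_{i=1}^n(\alpha_i-\beta_i)$, which is precisely your two-circle maximum-principle argument with the boundary data you list. Your spelled-out version, including the check that all boundary values on the truncated half-annulus lie in $[G_0\pi,F_0\pi]$ and the Löwner/Pick-function equivalence for the second assertion, is a faithful (and more explicit) rendering of the same argument.
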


\vspace{5mm}

%
%

\begin{lemma}
Let $z=re^{i\pi}$ $(r>0)$ and $0<b<a<2$.
\begin{enumerate}
  \item[$(1)$] If $b>1$, then we have
\[   (1-b)\pi \le \arg \frac{z^a-1}{z^b-1} \le (a-1)\pi  .  \]
  \item[$(2)$] If $b \le 1$, then we have
\[   0  \le \arg \frac{z^a-1}{z^b-1} \le (a-b)\pi  . \]
\end{enumerate}
\end{lemma}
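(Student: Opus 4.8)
The plan is to reduce everything to the single quantity $\phi_a(r):={\rm Arg}(z^a-1)$ for $z=re^{i\pi}$, since by the definition of $\arg$ adopted in the text we have $\arg\frac{z^a-1}{z^b-1}=\phi_a(r)-\phi_b(r)$. First I would pin down the range of $\phi_a$. Writing $z^a-1=-1+r^ae^{ia\pi}$, the point $z^a-1$ lies on the ray emanating from $-1$ in the direction $e^{ia\pi}$, with parameter $t=r^a\in(0,\infty)$. Along any ray $P(t)=P_0+tv$ not passing through the origin one has $\frac{d}{dt}{\rm Arg}\,P(t)={\rm Im}(v\overline{P_0})/|P(t)|^2$, which here has the constant sign of $-\sin(a\pi)$; combined with the endpoint limits ${\rm Arg}(z^a-1)\to\pi$ as $r\to0$ and ${\rm Arg}(z^a-1)\to a\pi$ as $r\to\infty$, this yields, for every $r>0$,
\[ \phi_a(r)\in(a\pi,\pi)\ (0<a<1),\qquad \phi_1(r)=\pi,\qquad \phi_a(r)\in(\pi,a\pi)\ (1<a<2). \]

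With these ranges, part (1), where $1<b<a<2$, is immediate: from $\phi_a<a\pi$ and $\phi_b>\pi$ we get $\phi_a-\phi_b<(a-1)\pi$, and from $\phi_a>\pi$ and $\phi_b<b\pi$ we get $\phi_a-\phi_b>(1-b)\pi$. For part (2), where $b\le1$, set $\Phi(r)=\arg\frac{z^a-1}{z^b-1}$ with $z=re^{i\pi}$. Specializing the reflection relation $\arg g(e^{\pi i}/r)+\arg g(re^{\pi i})=\sum_i(\alpha_i-\beta_i)\pi$ derived above to a single factor gives
\[ \Phi(r)+\Phi(1/r)=(a-b)\pi . \]
Hence it suffices to prove the lower bound $\Phi(\rho)\ge0$ for every $\rho>0$: the upper bound then comes for free, since $\Phi(r)=(a-b)\pi-\Phi(1/r)\le(a-b)\pi$.

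For the lower bound I would split on the size of $a$. If $1<a<2$ (still with $b\le1$), the ranges already give $\phi_a>\pi\ge\phi_b$, so $\phi_a-\phi_b>0$. The substantive case is $0<b<a\le1$, where I would use the integral representation: with $\zeta=\log z=\log r+i\pi$,
\[ \frac{z^a-1}{z^b-1}=\frac{\int_0^a e^{t\zeta}\,dt}{\int_0^b e^{t\zeta}\,dt}=:\frac{I(a)}{I(b)} . \]
For $t\in(0,a]$ with $a\le1$ the integrand $e^{t\zeta}$ has argument $t\pi\in(0,\pi]$, so it lies in the closed upper half plane; since the argument of an integral of vectors whose arguments fill an interval of length $<\pi$ lies strictly inside that interval, we get ${\rm Arg}\,I(b)\in(0,b\pi)$ and ${\rm Arg}\int_b^a e^{t\zeta}\,dt\in(b\pi,a\pi)$. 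Writing $I(a)=I(b)+\int_b^a e^{t\zeta}\,dt$ as a sum of two vectors in the open upper half plane whose arguments are separated by $b\pi$, the sum has argument strictly between those of the summands, so ${\rm Arg}\,I(a)>{\rm Arg}\,I(b)$; as $\arg\frac{z^a-1}{z^b-1}={\rm Arg}\,I(a)-{\rm Arg}\,I(b)$, this is exactly $\phi_a-\phi_b>0$.

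The main obstacle is precisely this case $0<b<a\le1$: there the range information is too weak, because the intervals for $\phi_a$ and $\phi_b$ overlap, so the needed monotonicity of the argument in the exponent is invisible at the level of ranges; it is the convex-cone (integral) argument that supplies ${\rm Arg}\,I(a)>{\rm Arg}\,I(b)$. Everything else is either elementary range bookkeeping or an application of the reflection symmetry already established in the text.
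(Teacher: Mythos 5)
Your proof is correct, and for the only substantive case it takes a genuinely different route from the paper. Part (1) and the sub-case $b\le 1<a$ of part (2) are handled in the paper exactly as you handle them, via the range bounds $\pi<{\rm Arg}(z^a-1)<a\pi$ for $1<a<2$ and $a\pi\le{\rm Arg}(z^a-1)\le\pi$ for $a\le 1$; the paper simply asserts these, whereas your ray-derivative computation along $t\mapsto -1+te^{ia\pi}$ actually proves them, which is a small but real improvement. For the remaining case $0<b<a\le 1$ the paper argues by rational approximation: it assumes $z^a=w^k$, $z^b=w^l$ for integers $k>l$, writes $\frac{z^a-1}{z^b-1}=z^{a-b}\cdot\frac{w^{k-1}+\cdots+w+1}{w^{k-1}+\cdots+w^{k-l}}$, invokes without proof the inequality $\arg(w^{k-1}+\cdots+w^m)\ge\arg(w^{m-1}+\cdots+w+1)$ on partial geometric sums to get both bounds at once, and then extends to irrational exponents by continuity. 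Your argument replaces the discrete geometric sums by the integral representation $\frac{z^a-1}{\log z}=\int_0^a z^t\,dt$ together with a convex-cone estimate on the argument of the integral; this is in effect the continuous limit of the paper's sum inequality, but it avoids both the density/continuity step and the unjustified comparison of partial sums, and it is fully rigorous as written. You also obtain the upper bound from the lower one via the reflection identity $\Phi(r)+\Phi(1/r)=(a-b)\pi$ already established in Section 2, where the paper instead extracts both bounds simultaneously from the prefactor $z^{a-b}$. The trade-off is that the paper's proof is shorter on the page but leaves two nontrivial steps to the reader, while yours is self-contained and, in the hard case, cleaner.
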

\begin{proof}
(1) \; Since $\pi < {\rm Arg}(z^a-1) < {\rm Arg} z^a = a \pi$ and
$\pi < {\rm Arg}(z^b-1) < {\rm Arg} z^b = b \pi$,
we have 
\[   (1-b)\pi < \arg \frac{z^a-1}{z^b-1} < (a-1)\pi .  \]

(2) \; When $b\le 1<a$, we have 
\[   0 < \arg \frac{z^a-1}{z^b-1} < (a-b)\pi ,  \]
since  $\pi < {\rm Arg}(z^a-1) < {\rm Arg} z^a = a \pi$ and
$b \pi = {\rm Arg} z^b \le  {\rm Arg}(z^b-1) \le  \pi$.

When $a\le 1$, we consider the case that there exist positive integers $k, l$ ($k>l$) 
such that $z^a=w^k$ (i.e., $w=r^{a/k}e^{a\pi i/k}$) and $z^b=w^l$.
Since
\[  \frac{z^a-1}{z^b-1}=\frac{w^{k-1}+\cdots + w+1}{w^{l-1}+\cdots+w+1}
         =z^{a-b} \cdot \frac{w^{k-1}+\cdots + w + 1}{w^{k-1}+\cdots+w^{k-l+1}+w^{k-l}} \]
and $\arg (w^{k-1} + w^{k-2} +\cdots +w^m) \ge  
\arg(w^{m-1} + \cdots + w + 1)$ for any $m=1,2,\ldots, k-1$, we have
\[  0  \le \arg \frac{z^a-1}{z^b-1} \le (a-b)\pi  . \]
By the continuity for $a$ and $b$, it also holds for any $a, b$ with $a\le 1$.
\end{proof}

\vspace{5mm}

%
%

\begin{proposition}
Let $0<a, b \le 2$ and 
$z\in \mathbb{H}_+$.
Then we have
\[  -F(b, a) \pi \le \arg \frac{z^a -1}{z^b -1} 
                             \le F(a, b) \pi ,  \]
where
\[  F(a, b) = \begin{cases} a-b \quad & \text{if } a\ge b, \; 0\le b \le 1 \\
                                                a-1 & \text{if } 1<a,b \le 2  \\
                                                0  & \text{if } a<b, \; 0\le a \le 1 \end{cases}. \]
\end{proposition}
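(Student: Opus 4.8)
The plan is to reduce the stated two-sided estimate on all of $\mathbb{H}_+$ to a one-sided estimate on the negative real axis, where Lemma 2.4 supplies explicit bounds, and then to match the resulting value against the piecewise definition of $F$.

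First I would eliminate the lower bound. From the defining formula $\arg\frac{z^a-1}{z^b-1} = {\rm Arg}(z^a-1) - {\rm Arg}(z^b-1)$ one has the reflection $\arg\frac{z^a-1}{z^b-1} = -\arg\frac{z^b-1}{z^a-1}$, so the lower estimate $-F(b,a)\pi \le \arg\frac{z^a-1}{z^b-1}$ is precisely the upper estimate $\arg\frac{z^b-1}{z^a-1} \le F(b,a)\pi$ with $a$ and $b$ exchanged. Thus it suffices to prove the upper bound $\arg\frac{z^a-1}{z^b-1} \le F(a,b)\pi$ for all $z \in \mathbb{H}_+$. The case $a=b$ is trivial, since then $\arg \equiv 0$ and $F(a,a)\ge 0$; and because both sides depend continuously on $(a,b)$, I may assume $0<a,b<2$ with $a\neq b$, recovering the endpoints $a=2$ or $b=2$ by passing to the limit.

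Next I would apply Corollary 2.3 with $n=1$, $\alpha=(a)$ and $\beta=(b)$, which gives $\arg\frac{z^a-1}{z^b-1}\le F_0((a),(b))\pi$ for every $z$ with $\Im z\ge 0$, where $F_0((a),(b))=\sup_{r>0}\arg\frac{(re^{i\pi})^a-1}{(re^{i\pi})^b-1}/\pi$. This is the step that imports the maximum-principle argument of Theorem 2.2, reducing the problem to the negative real axis. It then remains to verify $F_0((a),(b))\le F(a,b)$, i.e. to bound $\arg\frac{z^a-1}{z^b-1}$ above by $F(a,b)\pi$ along $z=re^{i\pi}$, and this I would do by the case split built into $F$. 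If $a>b$ and $b\le 1$, Lemma 2.4(2) gives $\arg\frac{z^a-1}{z^b-1}\le (a-b)\pi$; if $1<b<a$, Lemma 2.4(1) gives $\arg\frac{z^a-1}{z^b-1}\le (a-1)\pi$. If instead $a<b$ I would reflect: for $a\le 1$, Lemma 2.4(2) applied to $\frac{z^b-1}{z^a-1}$ gives $\arg\frac{z^b-1}{z^a-1}\ge 0$, hence $\arg\frac{z^a-1}{z^b-1}\le 0$; for $1<a<b$, Lemma 2.4(1) gives $\arg\frac{z^b-1}{z^a-1}\ge (1-a)\pi$, hence $\arg\frac{z^a-1}{z^b-1}\le (a-1)\pi$. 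In each subcase the bound is exactly $F(a,b)\pi$, so $F_0((a),(b))\le F(a,b)$, and the Corollary delivers the upper bound; the lower bound follows from the reflection.

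The hard part will be purely organizational: I must track which exponent plays the role of the larger one in Lemma 2.4 — especially after reflecting in the case $a<b$, where it is easy to slip on the sign $-(1-a)\pi=(a-1)\pi$ — and must confirm that the four subcases above partition exactly into the three branches of $F$. All of the genuine analysis, namely that the harmonic function $\arg\frac{z^a-1}{z^b-1}$ attains its extrema over $\mathbb{H}_+$ on the negative real axis, is already encapsulated in Corollary 2.3, so no fresh maximum-principle estimate is required at this stage.
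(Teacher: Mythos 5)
Your proposal is correct and follows essentially the same route as the paper: reduce to the ray $z=re^{i\pi}$ via Corollary 2.3, bound the argument there by Lemma 2.4, use the reflection $\arg\frac{z^a-1}{z^b-1}=-\arg\frac{z^b-1}{z^a-1}$ to handle the remaining sign/ordering, and recover $a=2$ or $b=2$ by a limiting argument. The only difference is organizational (you normalize to the upper bound and split into four subcases, while the paper normalizes to $b<a$ and proves both bounds at once), which does not change the substance.
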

\begin{proof}
We assume that $0<b<a<2$.
Since $\arg \frac{z^a-1}{z^b-1} = -\arg \frac{z^b-1}{z^a-1}$,
it suffices to show that
\begin{align*}
  0 \le & \arg \frac{z^a-1}{z^b-1} \le (a-b)\pi &
                    \text{if  } & 0<b\le 1,  \\
  (1-b)\pi \le & \arg \frac{z^a-1}{z^b-1} \le (a-1) \pi &
                    \text{if  } & 1<b<2.
\end{align*}
It is clear from Lemma 2.4 and Corollary 2.3.

We consider the case $a=2$ or $b=2$.
Choose sequences $\{a_n\}, \{b_n\}$ with
\[  0<a_n, b_n<2 \text{ and } \lim_{n\to \infty}a_n = \lim_{n\to \infty} b_n=2, \]
we have already shown the statement for $0<a_n,b_n<2$.
Taking the limit,  it holds for the case $a=2$ or $b=2$.
\end{proof}

\vspace{5mm}

%
%

\begin{theorem}
Let $|\gamma|\le 2$, $0<\alpha_i,\beta_i \le 2$ and $\alpha_i\neq \beta_j$
$(i,j=1,2, \ldots , n)$.
We denote by $S_n$ the set of all permutations on $\{1,2,\ldots,n\}$.
If it satisfies
\[  0 \le \gamma - \min_{\sigma\in S_n} \sum_{i=1}^n F(\beta_{\sigma (i)}, \alpha_i)
    \text{ and }
    \gamma + \min_{\sigma\in S_n} \sum_{i=1}^n F(\alpha_i, \beta_{\sigma (i)}) \le 1, \]
then we have
\[ f(t) =\begin{cases}
    t^\gamma \prod_{i=1}^n (t^{\alpha_i} -1)/(t^{\beta_i} -1) \qquad & \text{ if } t\neq 1\\
    \prod_{i=1}^n \alpha_i/\beta_i  & \text{ if } t=1 \end{cases} \]
is operator monotone on $(0,\infty)$.

\end{theorem}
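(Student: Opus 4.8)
The plan is to derive this from the equivalence in Theorem 2.2 (taken with $s=1$) and from Proposition 2.5, the single new ingredient being that $\arg g(z)$ is unchanged when we re-pair the numerator exponents $\alpha_i$ with the denominator exponents $\beta_j$. First I would dispose of the boundary cases: if some $\alpha_i$ or $\beta_i$ equals $2$, I replace each such exponent by $2-1/m$ (perturbing slightly so that all exponents stay mutually distinct), getting parameters $\alpha^{(m)},\beta^{(m)}$ with $0<\alpha_i^{(m)},\beta_i^{(m)}<2$ and $f_m\to f$ pointwise on $(0,\infty)$. Lowering an exponent that equals $2$ only decreases every value $F(\cdot,\cdot)$ in which it appears as the \emph{first} argument and, since it stays above $1$, leaves unchanged every value in which it appears as the \emph{second} argument; hence both minimized sums $\min_{\sigma}\sum_i F(\beta_{\sigma(i)},\alpha_i)$ and $\min_{\sigma}\sum_i F(\alpha_i,\beta_{\sigma(i)})$ can only decrease, so both hypotheses are inherited by $\alpha^{(m)},\beta^{(m)}$. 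As operator monotonicity is preserved under pointwise limits (recalled in the introduction), it then suffices to treat the strict case $0<\alpha_i,\beta_i<2$.

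So assume $0<\alpha_i,\beta_i<2$. Because reindexing a finite sum changes nothing, for every permutation $\sigma\in S_n$ and every $z$ with $\Im z\ge 0$ we have
\[ \arg g(z)=\sum_{i=1}^n\bigl({\rm Arg}(z^{\alpha_i}-1)-{\rm Arg}(z^{\beta_i}-1)\bigr)=\sum_{i=1}^n\arg\frac{z^{\alpha_i}-1}{z^{\beta_{\sigma(i)}}-1}. \]
Applying Proposition 2.5 to each summand gives, for every $\sigma$,
\[ -\pi\sum_{i=1}^n F(\beta_{\sigma(i)},\alpha_i)\le \arg g(z)\le \pi\sum_{i=1}^n F(\alpha_i,\beta_{\sigma(i)}). \]
Since the middle term does not depend on $\sigma$, I may choose the optimal permutation on each side. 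Evaluating on the ray $z=re^{\pi i}$ and taking the supremum (resp. infimum) over $r>0$, and recalling that these extreme values are exactly $F_0(\alpha,\beta)\pi$ and $G_0(\alpha,\beta)\pi=-F_0(\beta,\alpha)\pi$, I obtain
\[ F_0(\alpha,\beta)\le \min_{\sigma\in S_n}\sum_{i=1}^n F(\alpha_i,\beta_{\sigma(i)}),\qquad F_0(\beta,\alpha)\le \min_{\sigma\in S_n}\sum_{i=1}^n F(\beta_{\sigma(i)},\alpha_i). \]

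Finally I would feed these into the hypotheses. The first hypothesis gives $\gamma\ge\min_{\sigma}\sum_i F(\beta_{\sigma(i)},\alpha_i)\ge F_0(\beta,\alpha)$, i.e. $\gamma-F_0(\beta,\alpha)\ge 0$; the second, combined with $F_0(\alpha,\beta)\le\min_{\sigma}\sum_i F(\alpha_i,\beta_{\sigma(i)})$, gives $\gamma+F_0(\alpha,\beta)\le 1$. These are precisely conditions $(2)$ of Theorem 2.2 with $s=1$, so $f=f^1$ is operator monotone. I expect the genuinely delicate point to be the boundary case $\alpha_i=2$ or $\beta_i=2$: there ${\rm Arg}(z^{\alpha_i}-1)$ jumps on the ray and the $F_0$–apparatus of Theorem 2.2 is only set up for strict exponents, so the bookkeeping showing that the two inequalities survive the approximation must be checked with care. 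Once the reindexing identity is available, the interior argument is immediate.
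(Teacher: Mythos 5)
Your argument is essentially the paper's own proof: reduce to $0<\alpha_i,\beta_i<2$, use the reindexing identity together with Proposition 2.5 on the ray $z=re^{\pi i}$ to bound $F_0(\alpha,\beta)$ and $F_0(\beta,\alpha)=-G_0(\alpha,\beta)$ by the two minimized sums, and then invoke Theorem 2.2 with $s=1$. The only difference is that you spell out the approximation justifying ``we may assume $0<\alpha_i,\beta_i<2$'' (checking that the hypotheses are inherited by the perturbed exponents), which the paper leaves implicit; your verification of that step is correct.
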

\begin{proof}
We may assume that $0<\alpha_i, \beta_i <2$.
Let $\sigma, \tau \in S_n$ with
\[  \gamma - \sum_{i=1}^n F(\beta_{\sigma(i)}, \alpha_i) \ge 0, \quad
     \text{ and } \quad
     \gamma + \sum_{i=1}^n F(\alpha_i, \beta_{\tau(i)}) \le 1. \]


When $z=re^{i\pi}$ $(r>0)$, we have 
\begin{align*}
  -\sum_{i=1}^n F(\beta_{\sigma(i)}, \alpha_i))\pi \le & 
        \sum_{i=1}^n \arg \frac{z^{\alpha_i}-1}{z^{\beta_{\sigma(i)}}-1}
        = \arg \prod_{i=1}^n \frac{z^{\alpha_i} -1}{z^{\beta_i}-1}  \\
      = & \sum_{i=1}^n \arg \frac{z^{\alpha_i}-1}{z^{\beta_{\tau(i)}}-1}
      \le \sum_{i=1}^n F(\alpha_i, \beta_{\tau(i)}) \pi
\end{align*}
by Proposition 2.5.
This means that
\[  -\sum_{i=1}^n F(\beta_{\sigma(i)}, \alpha_i)) \le G_0(\alpha, \beta) 
    = -F_0(\beta, \alpha), 
     F_0(\alpha, \beta) \le \sum_{i=1}^n F(\alpha_i, \beta_{\tau(i)}) \]
and
\[  0 \le \gamma -F_0(\beta, \alpha) \le \gamma + F_0(\alpha, \beta) \le 1. \]
So $f$ is operator monotone by Theorem 2.2.
%
\end{proof}

\vspace{5mm}

We remark that it holds
\[  -\sum_{i=1}^n F(\beta_i, \alpha_i) \le G_0(\alpha, \beta) \le
      F_0(\alpha, \beta) \le \sum_{i=1}^n F(\alpha_i, \beta_i)  \]
from the argument in this proof.

For $0\le a_1 \le a_2 \le 2$ and $0 \le b_1 \le b_2 \le 2$, we have
\[  F(a_1,b_1) + F(a_2,b_2) \le F(a_1,b_2) + F(a_2,b_1) . \] 
To prove this, we set $D = ( F(a_1,b_2) + F(a_2,b_1) ) - (F(a_1,b_1) + F(a_2,b_2))$
and show $D\ge 0$ in each of the following cases:
\begin{align*}
(1) & \; a_1\ge 1  &  (2) & \; a_1 \le 1 \le a_2, \; b_1\ge1 \\ 
(3) & \; a_1 \le b_1 \le1 \le a_2  & (4) & \;b_1 \le a_1 \le 1 \le a_2,\; 1\le b_2 \\
(5) & \; b_1 \le a_1 \le 1 \le a_2,\; a_1\le b_2\le 1 &   (6) & \; b_2 \le a_1\le 1 \le a_2 \\
(7) & \; a_2\le 1, \; a_2 \le b_1 &  (8) & \;  a_2 \le 1, \; a_1 \le b_1 \le a_2 \le b_2 \\
(9) & \;  a_2 \le 1, \; b_1\le a_1\le a_2 \le b_2  &  (10) &  \;  a_1 \le b_1 \le b_2 \le a_2 \le 1  \\
(11) &  \;  b_1 \le a_1 \le b_2 \le a_2 \le 1 & (12) &  \;  b_2 \le a_1, a_2 \le 1
\end{align*}

Case (1).  Since $(F(a_1,b_2)-F(a_1,b_1)) = (F(a_2,b_2)-F(a_2,b_1))$, $D =0$.

Case (3). $D =  (0 + (a_2- b_1)) - (0 +F(a_2, b_2)) \ge 0$. \\
There are many easy calculations to show $D\ge 0$ in the rest cases
(In particular, $D=0$ in the cases (1), (2), (6), (7) and (12)).
So we omit them.

By using the property
\[  F(a_1,b_1) + F(a_2,b_2) \le F(a_1,b_2) + F(a_2,b_1) \qquad (a_1\le a_2, \; b_1\le b_2),\] 
we can get the following statement:

\vspace{5mm}

\begin{proposition}
Let $a_1,a_2,\ldots,a_n,b_1,b_2,\ldots,b_n\in [0,2]$ and $\sigma, \tau \in S_n$  permutations on $\{1,2,\ldots,n\}$ with 
\[     a_{\sigma(i)} \le a_{\sigma(j)}, \; b_{\tau(i)} \le b_{\tau(j)} \qquad
   \text{if } i<j .  \]
Then we have
\[   \sum_{i=1}^n F(a_{\sigma(i)}, b_{\tau(i)}) \le \sum_{i=1}^n F(a_i, b_i).  \]
\end{proposition}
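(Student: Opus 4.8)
The plan is to prove the stronger and more symmetric fact that the ``co-sorted'' pairing on the left minimizes $\sum_{i=1}^n F(a_i,b_{\pi(i)})$ over all $\pi\in S_n$. Since the indexing of the pairs is arbitrary, the right-hand side $\sum_i F(a_i,b_i)$ represents an \emph{arbitrary} matching of the $a$-values against the $b$-values, so this minimality statement contains the proposition as the special case $\pi=\mathrm{id}$. I would argue by induction on $n$, using as the sole analytic input the two-point inequality
\[  F(a_1,b_1) + F(a_2,b_2) \le F(a_1,b_2) + F(a_2,b_1) \qquad (a_1\le a_2,\ b_1\le b_2) \]
established just above. The base case $n=1$ is trivial.

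For the inductive step I would isolate the largest entries. Choose indices $m,k$ with $a_m=\max_i a_i$ and $b_k=\max_i b_i$. In the right-hand sum I replace the two pairs $(a_m,b_m)$ and $(a_k,b_k)$ by $(a_m,b_k)$ and $(a_k,b_m)$, leaving every other pair untouched. Because $a_k\le a_m$ and $b_m\le b_k$, the two-point inequality applied with $a_1=a_k$, $a_2=a_m$, $b_1=b_m$, $b_2=b_k$ gives $F(a_m,b_k)+F(a_k,b_m)\le F(a_m,b_m)+F(a_k,b_k)$, so this modification does not increase the total; call the new total $S'$, so that $S'\le \sum_i F(a_i,b_i)$. (If $m=k$ there is nothing to do.)

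Next I would peel off the top pair and invoke the induction hypothesis. By construction $S'$ contains the term $F(a_m,b_k)=F(a_{\sigma(n)},b_{\tau(n)})$ pairing the largest $a$-value with the largest $b$-value, while the remaining $n-1$ terms form some matching of the remaining $a$- and $b$-values. Applying the hypothesis to these $n-1$ values — whose co-sorted pairing is exactly $\sum_{i=1}^{n-1}F(a_{\sigma(i)},b_{\tau(i)})$, since deleting a maximal $a$ and a maximal $b$ from the sorted orders leaves the sorted orders of the rest — yields
\[  \sum_{i=1}^{n-1} F(a_{\sigma(i)},b_{\tau(i)}) \le S' - F(a_m,b_k) . \]
Adding $F(a_m,b_k)=F(a_{\sigma(n)},b_{\tau(n)})$ to both sides gives $\sum_{i=1}^n F(a_{\sigma(i)},b_{\tau(i)}) \le S' \le \sum_i F(a_i,b_i)$, completing the induction.

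I expect the only real care to lie in the bookkeeping rather than in any deep obstacle. One must check that the max/max swap is oriented to match the two-point inequality — it is, since $a_m$ and $b_k$ are the larger arguments — and that, after the swap, the residual $n-1$ terms genuinely form an unconstrained matching whose co-sorted value equals the first $n-1$ terms of the left-hand side (this is where the arbitrariness of the indexing is used, to present that residual matching as an ``identity'' pairing for the hypothesis). Ties among the $a_i$ or among the $b_i$ cause no difficulty: any maximizing indices $m,k$ may be chosen, and the two-point inequality degenerates to an equality whenever two equal values are exchanged, so the value of the co-sorted pairing does not depend on how ties are resolved.
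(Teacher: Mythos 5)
Your proposal is correct and follows essentially the same route as the paper: an induction on $n$ whose inductive step uses the two-point exchange inequality $F(a_1,b_1)+F(a_2,b_2)\le F(a_1,b_2)+F(a_2,b_1)$ to re-pair the maximal $a$-value with the maximal $b$-value, peels off that top pair, and applies the hypothesis to the remaining $n-1$ values. The paper merely packages the same swap as an explicit transposition $\tau\in S_{n+1}$ after first sorting the $a_i$, so the two arguments differ only in bookkeeping.
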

\begin{proof}
We use an induction for $n$.
In the case $n=2$, we have already proved in above remark.

We assume that it holds for $n$, and will show that it also holds for $n+1$.
We may assume that
\[  a_1\le a_2 \le \ldots \le a_n \le a_{n+1}. \]
We choose $k$ with $b_k =\max \{b_1,b_2,\ldots,b_{n+1}\}$.
There is a permutation $\tau \in S_{n+1}$ such that
\[  \tau(i) = \begin{cases} n+1  \quad & i= k  \\
                                   k   & i=n+1 \\
                                   i   & \text{otherwise} \end{cases} .  \]
Since $a_k\le a_{n+1}$ and $b_{n+1}\le b_k$, 
\begin{align*}
  F(a_k,b_{\tau(k)}) + F(a_{n+1}, b_{\tau(n+1)}) & = F(a_k, b_{n+1})+F(a_{n+1}, b_k) \\ 
     &  \le F(a_k,b_k)+F(a_{n+1}, b_{n+1}).  
\end{align*}
Then we have
\begin{align*}
 & \sum_{i=1}^{n+1} F(a_i, b_i) 
  = \sum_{i \neq k, n+1} F(a_i, b_i) + F(a_k,b_k) + F(a_{n+1}, b_{n+1}) \\
 \ge & \sum_{i \neq k, n+1} F(a_i, b_i) + F(a_k,b_{\tau(k)}) + F(a_{n+1}, b_{\tau(n+1)})  
 = \sum_{i=1}^{n+1} F(a_i, b_{\tau(i)})  .
\end{align*}
By applying the hypothesis of induction for $a_1,\ldots, a_n$ and $b_1,\ldots,b_{k-1},
b_{k+1},\ldots, b_{n+1}$, there exists a permutation $\sigma \in S_{n+1}$ such that
\begin{gather*}
  \sigma ( \{1,2,\ldots, n\} ) = \{1,2,\ldots ,n+1\} \setminus \{k \}, \;
   \sigma(n+1) = k  \\
  b_{\sigma(i)}\le b_{\sigma(j)} (\le b_k) \quad \text{if } 1\le i <j \le n  \\
\intertext{and }
   \sum_{i=1}^n F(a_i, b_{\sigma(i)}) \le \sum_{i=1}^n F(a_i, b_{\tau(i)}) .
\end{gather*}
So it follows that
\begin{align*}
    \sum_{i=1}^{n+1} F(a_i, b_{\sigma(i)})  
         &   \le \sum_{i=1}^n F(a_i, b_{\tau(i)}) + F(a_{n+1}, b_k)  \\
         &   = \sum_{i=1}^n F(a_i, b_{\tau(i)}) + F(a_{n+1}, b_{\tau(n+1)})
            \le \sum_{i=1}^{n+1} F(a_i, b_i).
\end{align*}
\end{proof}

\vspace{5mm}

We can see that Theorem 2.6 implies Theorem 1.1 and also that
these two theorems are equivalent by Proposition 2.7.

In \cite{szabo}  Szab\'o remarked that
\begin{gather*}
   a \pi \le {\rm Arg} (z^a-1) \le \pi \qquad 0\le a \le 1, \\
   \pi \le {\rm Arg} (z^a-1) \le a \pi \qquad 1\le a \le 2,
\end{gather*}
when $z = re^{i\pi}$  $(r\ge 0)$.
This means that
\[  -S(b,a)\pi \le \arg\frac{z^a-1}{z^b-1} \le S(a,b) \pi   \]
for any $z = re^{i\pi}$, where
\[  S(a,b) = \begin{cases} 1-b \quad & 0 < a,b \le 1 \\
                0  & 0< a \le 1 \le b \le 2 \\
                a-b  & 0 < b \le 1 \le a \le 2 \\
                a-1  & 1\le a,b \le 2 \end{cases} .  \]
Since $a-b\le F_0(a,b) \le F(a,b) \le S(a,b)$, it follows from Theorem 1.1 that,
for $|\gamma|\le 2$ and  $0<\alpha_i, \beta_j\le 2$ $(1\le i,j \le n)$ with
$\alpha_1\le \cdots \le \alpha_u\le 1 <\alpha_{u+1}\le \cdots \le \alpha_n$ and
$\beta_1 \le \cdots \le \beta_v \le 1 < \beta_{v+1} \le \cdots \le \beta_n$,
\[ f(t) =\begin{cases}
    t^\gamma \prod_{i=1}^n (t^{\alpha_i} -1)/(t^{\beta_i} -1) \qquad & \text{ if } t\neq 1\\
    \prod_{i=1}^n \alpha_i/\beta_i  & \text{ if } t=1 \end{cases} \]
is operator monotone on $[0,\infty)$  if
\[     0\le \gamma - \sum_{i=1}^n S(b_i,a_i) \le
          \gamma + \sum_{i=1}^n S(a_i,b_i)\le 1 .  \]  
In other words, $f(t)$ becomes operator monotone if
\[    0\le \gamma +\sum_{i=1}^u\alpha_i +(n-u)-(v+\sum_{j=v+1}^n \beta_j)
      \le \gamma + u  +\sum_{i=u+1}^n\alpha_i -\sum_{j=1}^v \beta_j -(n-v) \le 1. \]
When $\gamma = (1+ \sum_{i=1}^n (\alpha_i-\beta_i))/2$, it holds $f(t) = tf(1/t)$
and $c(\lambda, \mu) = \frac{1}{\mu f(\lambda \mu^{-1})}$ becomes Morozowa-Chentsov function (\cite{szabo}).

\vspace{5mm}


\section{Additional results}

Let  $\alpha =(\alpha_1,\alpha_2,\ldots,\alpha_n)$,
$\beta =(\beta_1, \beta_2, \ldots,\beta_n)$ with
$\alpha_i, \beta_i >0$ and $\alpha_i\neq \beta_j$  $(i,j=1.2.\ldots,n)$.
We set
\[  g(z) =\begin{cases} \prod_{i=1}^n (z^{\alpha_i}-1)/(z^{\beta_i}-1) \quad 
      & \Im z\ge 0 \text{ and } z \neq 1\\
      \prod_{i=1}^n \alpha_i/\beta_i & z=1 \end{cases} . \]

\vspace{5mm}

%
%

\begin{proposition}
If $g$ is holomorphic on $\mathbb{H}_+$ and has no zeros on $\mathbb{H}_+$, then $\max\{ \alpha_i, \beta_j \mid i,j=1,2,\ldots,n \}\le 2$.

In particular, if $\max\{ \alpha_i, \beta_j \mid i,j=1,2,\ldots,n \}>2$, then $g(t)$ is not operator monotone on $[0,\infty)$. 
\end{proposition}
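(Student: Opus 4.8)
The plan is to prove the first assertion in contrapositive form and then read off the \emph{in particular} statement from Löwner's theorem. So I would set $M=\max\{\alpha_i,\beta_j\}$ and assume $M>2$, aiming to exhibit a point of $\mathbb{H}_+$ at which $g$ has a zero or a pole. The natural test point is
\[ z_0=e^{2\pi i/M},\qquad {\rm Arg}\,z_0=\frac{2\pi}{M}\in(0,\pi), \]
which lies in $\mathbb{H}_+$ precisely because $M>2$ forces $2\pi/M<\pi$. The one computation I need is that, for any exponent $\gamma\in\{\alpha_1,\dots,\alpha_n,\beta_1,\dots,\beta_n\}$, one has $z_0^{\gamma}=e^{2\pi i\gamma/M}=1$ if and only if $\gamma/M\in\mathbb{Z}$, i.e.\ (since $0<\gamma\le M$) if and only if $\gamma=M$. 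Thus at $z_0$ the only factors $z^{\gamma}-1$ that vanish are those with $\gamma=M$, and each vanishes to first order because its derivative $\gamma z^{\gamma-1}$ is nonzero at $z_0$.

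Next I would use the hypothesis $\alpha_i\neq\beta_j$, which guarantees that the value $M$ is attained either only among the $\alpha_i$ or only among the $\beta_j$, never both. In the former case the numerator of $g$ vanishes at $z_0$ while the denominator does not, so $g$ has a zero at $z_0\in\mathbb{H}_+$; in the latter case the denominator vanishes while the numerator does not, so $g$ has a pole at $z_0$. A zero contradicts the hypothesis that $g$ has no zeros on $\mathbb{H}_+$, and a pole contradicts holomorphy of $g$ on $\mathbb{H}_+$. Either way $M>2$ is impossible, giving $M\le 2$.

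For the second assertion I would argue by contradiction: assume $M>2$ yet $g$ is operator monotone on $[0,\infty)$. Since $g(t)\neq0$ for $t\in(0,\infty)\setminus\{1\}$ while the construction above places a zero or pole of $g$ in $\mathbb{H}_+$, the function $g$ is non-constant, so Löwner's theorem makes it a Pick function with an analytic continuation $\tilde g$ that is holomorphic on $\mathbb{H}_+$ and satisfies $\tilde g(\mathbb{H}_+)\subset\mathbb{H}_+$, hence $\Im\tilde g>0$ and $\tilde g$ is zero-free. Because $\tilde g$ and the product formula $g$ share the same boundary values on $(0,\infty)\setminus\{1\}$ and both are holomorphic in $\mathbb{H}_+$ near that interval, uniqueness of analytic continuation forces $\tilde g=g$ throughout $\mathbb{H}_+$ away from the poles of $g$. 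Evaluating at the point $z_0$ of the first part then yields the contradiction: a zero of $g$ would give $\tilde g(z_0)=0$ against $\Im\tilde g>0$, and a pole of $g$ would give $|\tilde g(z)|\to\infty$ as $z\to z_0$, impossible for the holomorphic $\tilde g$.

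I expect the main obstacle to be ruling out cancellation between a zero of the numerator and a pole of the denominator, since different exponents generally do produce common roots of unity at various points of the unit circle. The device that overcomes this is exactly the choice $z_0=e^{2\pi i/M}$: it is tuned so that \emph{only} the maximal exponent reaches a root of $z^{\gamma}-1$ there, after which $\alpha_i\neq\beta_j$ separates the zero case from the pole case with no overlap. A lesser technical point, handled by uniqueness of analytic continuation, is transferring the conclusion from the explicit product formula to the abstract Pick extension in the second assertion.
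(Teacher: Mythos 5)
Your proof is correct and takes essentially the same route as the paper: test the point $e^{2\pi i/M}$ (the paper writes $e^{\pi i/\alpha_1}$, evidently a typo for $e^{2\pi i/\alpha_1}$), note that only exponents equal to the maximum can make a factor $z^\gamma-1$ vanish there, and let $\alpha_i\neq\beta_j$ force an unremovable zero or pole in $\mathbb{H}_+$. Your streamlining---fixing $M$ at the outset instead of chaining the two cases as the paper does---and your explicit L\"owner argument for the ``in particular'' clause are only minor organizational differences.
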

\begin{proof}
In the case of $\alpha_1>2$.
Then $e^{\pi i/\alpha_1} \in \mathbb{H}_+$ and $(e^{\pi i/\alpha_1} )^{\alpha_1}-1 = 0$.
Because $g$ has no zeros on $\mathbb{H}_+$, there exists some $\beta_j$ satisfying $(e^{\pi i/\alpha_1})^{\beta_j} -1 = 0$,
that is, $\beta_j \ge \alpha_1 >2$.

In the case of $\beta_1>2$.
Then $e^{\pi i/\beta_1} \in \mathbb{H}_+$ and $(e^{\pi i/\beta_1})^{\beta_1}-1=0$.
Since $g(z)$ is holomorphic on $\mathbb{H}_+$,  $e^{\pi i/\beta_1}$ is a removable singularity of $g(z)$.
This means that $(e^{\pi i/\beta_1})^{\alpha_k}-1=0$ for some $\alpha_k$, that is, $\alpha_k\ge \beta_1>2$.

If $\max\{ \alpha_i, \beta_j \mid i,j=1,2,\ldots,n \}>2$, then it contradicts to $\alpha_i\neq \beta_j$ for any  $i,j=1,2,\ldots, n$.
\end{proof}

\vspace{5mm}

In the rest of this section, we assume that $0\le \alpha_i, \beta_i \le 2$.
We have already shown that
\begin{gather*}
  G_0(\alpha, \beta)\le 0, \quad F_0(\alpha, \beta)
       \ge \sum_{i=1}^n(\alpha_i-\beta_i) \\
  \text{and }  G_0(\alpha, \beta)<0 \;  \Leftrightarrow \; 
  F_0(\alpha,\beta)>\sum_{i=1}^n (\alpha_i - \beta_i),
\intertext{where}
  F_0(\alpha, \beta)  = \sup \{ \arg g(re^{\pi i}) / \pi \mid r>0 \}
         = \sup \{ \arg g(z) / \pi \mid z\in \mathbb{H}_+ \}  \\
  \text{and } G_0(\alpha, \beta) = -F_0(\beta, \alpha).
\end{gather*}

\vspace{5mm}

%
%

\begin{proposition}
Let $0<b<a \le 2$.
Then the following are equivalent:
\begin{enumerate}
  \item[$(1)$] $F_0(a,b)=a-b$.
  \item[$(2)$] $G_0(a,b) =0$.
  \item[$(3)$] $0<b\le 1$.
\end{enumerate}
\end{proposition}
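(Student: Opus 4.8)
The plan is to reduce everything to the single‑variable function $\phi(r)=\arg g(re^{\pi i})$ attached to $g(z)=(z^{a}-1)/(z^{b}-1)$, for which $\pi F_0(a,b)=\sup_{r>0}\phi(r)$ and $\pi G_0(a,b)=\inf_{r>0}\phi(r)$, and to prove the implications $(1)\Leftrightarrow(2)$, $(3)\Rightarrow(2)$ and, in contrapositive form, ``$b>1\Rightarrow(2)$ fails''. First, $(1)\Leftrightarrow(2)$ comes essentially for free. In this $n=1$ situation $\sum_i(\alpha_i-\beta_i)=a-b$, so the facts collected just before Theorem 2.2 give $G_0(a,b)\le 0$, $F_0(a,b)\ge a-b$, and the equivalence $G_0(a,b)<0\Leftrightarrow F_0(a,b)>a-b$. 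Contraposing that last equivalence yields $G_0(a,b)=0\Leftrightarrow F_0(a,b)=a-b$, which is exactly $(2)\Leftrightarrow(1)$. It therefore suffices to relate $(2)$ to $(3)$.

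For $(3)\Rightarrow(2)$ I would assume $0<b\le 1$ and invoke Proposition 2.5 (which, unlike Lemma 2.4, covers the endpoint $a=2$). Since $b<a$ and $0\le b\le 1$ we have $F(b,a)=0$ and $F(a,b)=a-b$, so Proposition 2.5 gives $0\le \arg g(z)\le (a-b)\pi$ for all $z\in\mathbb{H}_+$, in particular for $z=re^{\pi i}$. Hence $\pi G_0(a,b)=\inf_{r>0}\phi(r)\ge 0$, and combined with $G_0(a,b)\le 0$ this forces $G_0(a,b)=0$, i.e.\ $(2)$.

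The crux is the converse: if $b>1$, so that $1<b<a\le 2$, then $G_0(a,b)<0$. I would prove this by exhibiting small $r>0$ with $\phi(r)<0$. Writing $z=re^{\pi i}$ we have $z^{a}-1=(r^{a}\cos a\pi-1)+i\,r^{a}\sin a\pi$, and the branch of ${\rm Arg}$ normalized by $\lim_{r\to0+}\arg g(re^{\pi i})=0$ is the one with ${\rm Arg}(z^{a}-1),{\rm Arg}(z^{b}-1)\to\pi$ as $r\to0+$. Expanding this continuous argument to first order about the point $-1$ gives ${\rm Arg}(z^{a}-1)=\pi-r^{a}\sin a\pi+o(r^{a})$ and ${\rm Arg}(z^{b}-1)=\pi-r^{b}\sin b\pi+o(r^{b})$, whence $\phi(r)=r^{b}\sin b\pi-r^{a}\sin a\pi+o(r^{b})$. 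Since $1<b<2$ we have $\sin b\pi<0$, and since $b<a$ the term $r^{b}\sin b\pi$ dominates as $r\to0+$; thus $\phi(r)<0$ for all sufficiently small $r$, giving $G_0(a,b)<0$ and hence the failure of $(2)$. Together with $(1)\Leftrightarrow(2)$ and $(3)\Rightarrow(2)$ this closes the equivalence of the three conditions.

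The main obstacle is precisely this last asymptotic step. One must justify rigorously that the relevant continuous branch really is the $\lim_{r\to0+}\arg g(re^{\pi i})=0$ normalization, and that the first‑order expansion of ${\rm Arg}(z^{a}-1)$ and ${\rm Arg}(z^{b}-1)$ near $-1$ carries a genuinely lower‑order remainder, so that the sign of the leading coefficient $r^{b}\sin b\pi$ truly controls $\phi(r)$ for small $r$. Everything else is bookkeeping with Proposition 2.5 and the previously recorded relations between $\theta(a,b),\Theta(a,b),F_0(a,b)$ and $G_0(a,b)$.
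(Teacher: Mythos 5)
Your proposal is correct and follows essentially the same route as the paper: $(1)\Leftrightarrow(2)$ from the recorded equivalence $G_0<0\Leftrightarrow F_0>a-b$, $(3)\Rightarrow(2)$ from $F(b,a)=0$ via Proposition 2.5, and the contrapositive of $(2)\Rightarrow(3)$ by exhibiting small $|z|$ with $\arg g(z)<0$, where the sign is governed by $\sin b\pi<0$ for $1<b<2$. The only cosmetic difference is that the paper computes $\Re g$ and $\Im g$ explicitly at $z=re^{i\theta}$ with $\pi/b<\theta<\pi$, whereas you expand $\mathrm{Arg}(z^{a}-1)$ and $\mathrm{Arg}(z^{b}-1)$ to first order on the ray $\theta=\pi$; both hinge on the same leading term $r^{b}\sin b\pi$, and your expansion (with remainder $O(r^{2b})$) is rigorous as written.
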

\begin{proof}
$(1)\Leftrightarrow (2)$ It has been already stated in the above remark.

$(3)\Rightarrow (2)$ Because $F(b,a)=0$, 
\[  0 \ge G_0(a,b) = -F_0(b,a) \ge - F(b,a) =0.  \]

$(2)\Rightarrow (3)$ It suffices to show that $G_0(a,b)<0$ in the case 
$1< b < 2$.
We set $z=r e^{i\theta}$ and
\begin{align*}
  g(z) & = \frac{z^a-1}{z^b-1} =\frac{1}{|z^b -1|^2}(z^a-1)(\bar{z}^b-1) \\
        & = \frac{1}{|z^b -1|^2} (r^{a+b}\cos (a-b)\theta 
                    - r^a \cos a\theta -r^b \cos b\theta +1) \\
        & \qquad + \frac{i}{|z^b -1|^2} (r^{a+b}\sin (a-b)\theta
                    - r^a \sin a\theta + r^b \sin b\theta).
\end{align*}
When 
$1<b<2$, we choose a number $\theta$ satisfying
\[  \frac{\pi}{b} < \theta < \pi .  \]
For a sufficiently small positive $r$, we also have
\[  \Re g(z) >0  \text{ and } \Im g(z)<0 .  \]
This means that $G_0(a,b)<0$ if 
$1<b<2$.
\end{proof}

\vspace{5mm}

%
%

\begin{example}
\begin{enumerate}
  \item[$(1)$] Let $\alpha_i>\beta_i$ and $\beta_i \le 1$ $(i=1,2,\ldots, n)$.
Then we have
\[  F_0(\alpha, \beta)= \sum_{i=1}^n(\alpha_i - \beta_i) \text{ and }
    G_0(\alpha, \beta) = 0.  \]
Moreover,  $g(t)= \prod_{i=1}^n (t^{\alpha_i}-1)/(t^{\beta_i}-1)$ is operator monotone if and only if 
$0\le \sum_{i=1}^n(\alpha_i-\beta_i) \le 1$.
  \item[$(2)$] For real numbers $a$, $b$ with $|a|,|b|\le 2$ and $a\neq b$,
we define the function $h_1:(0,\infty) \longrightarrow \mathbb{R}$ as follows:
\[  h_1(t) = \frac{b}{a}\frac{t^a-1}{t^b-1},   \]
where we consider $(t^a-1)/a$ as $\log t$ in the case of $a=0$.
Then $h_1$ is operator monotone on $(0,\infty)$ if and only if 
\begin{align*}
   (a,b) \in & \{ (a,b)\in \mathbb{R}^2 \mid 0< a-b \le 1, \; a \ge -1, \text{ and }
    b \le 1 \} \\
   & \qquad \cup ( [0,1]\times [-1,0]) \setminus \{(0,0)\}.  
\end{align*}
  \item[$(3)$] For real numbers $a$, $b$ with $a\neq b$,
we define the function $h_2:[0,\infty) \longrightarrow \mathbb{R}$ as follows:
\[  h_2(t) = \frac{t^a+1}{t^b+1}.   \]
If $h_2$ is operator monotone on $[0,\infty)$, then  $(a,b)\in [0,1]\times [-1,0]$. 
In particular, if $( 0<a-b \le 1$ and $b \le 0 \le a)$ or $(0<a=-b\le 1)$, then
$h_2$ is operator monotone, and if $(a=1$ and $-1<b<0)$ or $(b=-1$ and $0<a<1)$, then
$h_2$ is not operator monotone.
\end{enumerate}
\end{example}
\begin{proof} \;
(1) By Proposition 3.2, we have
\[  \sum_{i=1}^n (\alpha_i-\beta_i) = \sum_{i=1}^n F_0(\alpha_i, \beta_i)
     \ge F_0(\alpha, \beta) \ge \sum_{i=1}^n (\alpha_i - \beta_i) . \]
This means that $F_0(\alpha, \beta)= \sum_{i=1}^n(\alpha_i - \beta_i)$ and
$G_0(\alpha, \beta) = 0$.

The rest part follows from Corollary 2.3.

(2) When $a,b\ge 0$, we have $\lim_{r\to \infty} \arg h_1(r e^{\pi i}) = (a-b) \pi$.
Since the operator monotonicity of $h_1$ implies $b<a$,
we have that $h_1$ is operator monotone if and only if 
$0<a-b \le 1$ and $0\le b \le 1$ by Proposition 3.2.

Assume that $a \ge 0 > b$.
If $h_1$ is operator monotone, then we have $(a,b) \in [0,1]\times [-1,0]$ 
because
\[  \lim_{ r\to 0+} \arg h_1(re^{i\pi}) = -b \pi   \; \text{ and } \;
     \lim_{r\to \infty} \arg h_1(re^{i \pi}) = a \pi.  \]
The function $h_1$ can be written as follows:
\[  h_1(t) = \frac{-b}{a}\cdot t^{-b} \cdot \frac{t^a-1}{t^{-b}-1}.  \]
Since
\begin{gather*}
    -b-F(-b,a) =\begin{cases} -b  \quad & 0\le-b\le a \le 1 \\
                                        a   & 0 \le a < -b \le 1 \end{cases} \\
\intertext{and }
    -b+F(a,-b) =\begin{cases} a      & 0\le-b\le a \le 1 \\
                                       -b \quad  & 0 \le a < -b \le 1 \end{cases} ,
\end{gather*}
$h_1$ is operator monotone if and only if 
$(a,b) \in [0,1]\times [-1,0]$ by Theorem 2.6.

Assume that $b > 0 \ge a$.
Since $\lim_{r \to \infty} \arg h_1(re^{\pi i}) = -b \pi$, $h_1$ is not operator monotone.

Assume that  $a,b \le 0$. 
Since $\lim_{r \to 0+} \arg h_1(re^{\pi i}) = (a-b) \pi$, the operator monotonicity of $h_1$ 
implies  $0\le a-b \le 1$.
We rewrite the function $h_1$ as follows:
\[  h_1(t) =\frac{-b}{-a}\cdot t^{a-b}\cdot \frac{t^{-a}-1}{t^{-b}-1}.  \]
We assume that $a <-1$ and $0\le a-b \le 1$.
Because $1\le -a \le -b \le2$, we have $F_0(-b,-a) >(a-b)\pi$ by Proposition 3.2.
So there exists $z_0\in (-\infty, 0]$ such that
\[ \arg \frac{z_0^{-a}-1}{z_0^{-b}-1} < (b-a ) \pi  \text{ and }
   \arg h_1(z_0) <0,   \]
that is, $h_1$ is not operator monotone.
We assume that $a>-1$ and $0\le a-b \le 1$.
Since
\[  a-b -F(-b,-a) =0  \text{ and } a-b +F(-a,-b) =a-b \le 1, \]
$h_1$ is operator monotone.

(3) Since
\begin{align*}
  h_2(t) & = \frac{t^b-1}{t^a-1}\frac{t^{2a}-1}{t^{2b}-1}
          =t^{-b}\frac{t^{-b}-1}{t^a-1}\frac{t^{2a}-1}{t^{-2b}-1}  \\
       & =t^a\frac{t^b-1}{t^{-a}-1} \frac{t^{-2a}-1}{t^{2b}-1}
          =t^{a-b}\frac{t^{-b}-1}{t^{-a}-1}\frac{t^{-2a}-1}{t^{-2b}-1} , 
\end{align*}
$h_2$ is not operator monotone if 
$(a,b) \notin [-1,1]\times [-1,1]$ by Proposition 3.1.

For $z=re^{i\pi}$, we have
\begin{align*}
  h_2(z) & = \frac{1}{ | z^b+1 |^2} (z^a+1) (\bar{z}^b+1) \\
       & =  \frac{1}{ |z^b+1|^2} 
            (r^{a+b}\cos (a-b)\pi + r^a\cos a \pi + r^b \cos b\pi +1 ) \\
       & \qquad  +\frac{i}{|z^b+1|^2} 
            (r^{a+b}\sin (a-b)\pi + r^a \sin a \pi -r^b\sin b \pi).
\end{align*}

We assume $0\le a,b \le 1$.
Since $\lim_{r\to \infty} \arg h_2(re^{\pi i})= (a-b)\pi$, $h_2$ is not operator monotone when
$b>a$.
If $0<b<a \le 1$, then there exists a sufficiently small $r>0$ such that
\[  \Im h_2(re^{\pi i}) = \frac{r^b}{|z^b+1|^2}(r^a \sin (a-b)\pi + r^{a-b}\sin a\pi
       - \sin b\pi) <0,  \]
that is, $h_2$ is not operator monotone.

We assume $-1\le a, b<0$.
Applying the similar argument for the facts
\[
  \lim_{r\to 0+} \arg h_2(re^{\pi i}) =(a-b) \pi  
\]
and, for a sufficiently large $r>0$, $\Im h_2(re^{\pi i}) <0$ when 
$-1\le b <a <0$,
we can show  that $h_2$ is not operator monotone.

We assume $-1 \le a\le 0$ and $0<b \le 1$.
Since $\lim_{r\to \infty} \arg h_2(re^{\pi i}) = -b \pi <0$,
$h_2$ is also not operator monotone.

We assume $0\le a \le 1$ and $-1\le b \le 0$.
We can easily  show that, for $z=re^{i\theta}$ $(0\le \theta \le \pi)$,
$\{ \arg h_2(z) \}$ uniformly converges to $a\theta$ (resp. $-b\theta$) 
when $r$ tends to $\infty$ (resp. $r$ tends to $0$). 
If $0\le a-b\le 1$, then we have $\Im h_2(re^{\pi i})\ge 0$ because 
$\sin (a-b)\pi$, $\sin a \pi$, $-\sin b\pi \ge 0$.
Using the same method as the proof of Theorem 2.2, 
we can get the operator monotonicity of $h_2$.
If $0<a=-b\le 1$, then 
\[  \Im h_2(re^{\pi i}) = \frac{1}{|z^{-a}+1|^2}(2\cos a\pi +r^a+r^{-a})\sin a\pi \ge 0. \] 
So is $h_2$.

If $a=1$ and $-1<b<0$, then we have
\[  \Im h_2(re^{\pi i}) = \frac{1}{|z^b+1|^2}r^b(r-1) \sin b\pi <0  \]
for some $r>0$.
If $b=-1$ and $0<a<1$, then we have
\[  \Im h_2(re^{\pi i}) = \frac{1}{|z^{-1}+1|^2} r^a(1-\frac{1}{r})\sin a\pi <0  \]
for some $r>0$.
So, for these 2 cases, $h_2$ is not operator monotone.
\end{proof}
We consider the function $h_1$ (resp. $h_2$) as an extension of the representing
function $M_\alpha$ (resp. $L_p$) of the power difference mean (resp. the Lehmer mean),
where
\begin{gather*}
  M_\alpha (t) = \frac{\alpha -1}{\alpha} \frac{t^\alpha -1}{t^{\alpha -1} -1}, \qquad (-1\le \alpha \le 2)  \\
\intertext{and}
  L_p(t) =  \frac{t^p+1}{t^{p-1}+1}, \qquad (0 \le p \le 1) 
\end{gather*}
(see \cite{hiaikosaki}, \cite{nakamura}). 


\vspace{5mm}


%

\end{document}